\documentclass[12pt,reqno]{amsart}
\usepackage[left=80pt,right=80pt]{geometry}
\usepackage[usenames]{color}
\usepackage{amsmath}
\usepackage{amssymb}
\usepackage{amsthm}
\usepackage{float}
\usepackage{tikz}
\usepackage{mathdots}

\usepackage{hyperref,url}
\hypersetup{
	colorlinks=true,
  linkcolor=black,          
  citecolor=black,         
  filecolor=black,      
  urlcolor=black           
}

\newtheorem{prop}{Proposition}

\newtheorem{theorem}[prop]{Theorem}

\theoremstyle{definition}

\newtheorem{remark}[prop]{Remark}
\newtheorem{example}[prop]{Example}

\allowdisplaybreaks
\newcommand{\mylabel}[2]{#2\def\@currentlabel{#2}\label{#1}}
\makeatletter
\makeatother
\setcounter{MaxMatrixCols}{20}

\makeatletter
\renewcommand*\env@matrix[1][\arraystretch]{%
  \edef\arraystretch{#1}%
  \hskip -\arraycolsep
  \let\@ifnextchar\new@ifnextchar
  \array{*\c@MaxMatrixCols c}}
\makeatother

\begin{document}
\tikzset{mystyle/.style={matrix of nodes,
        nodes in empty cells,
        row 1/.style={nodes={draw=none}},
        row sep=-\pgflinewidth,
        column sep=-\pgflinewidth,
        nodes={draw,minimum width=1cm,minimum height=1cmanchor=center}}}
\tikzset{mystyleb/.style={matrix of nodes,
        nodes in empty cells,
        row sep=-\pgflinewidth,
        column sep=-\pgflinewidth,
        nodes={draw,minimum width=1cm,minimum height=1cmanchor=center}}}

\title{A formula for the number of up-down words}

\author[SELA FRIED]{Sela Fried$^*$}
\thanks{$^*$Department of Computer Science, Israel Academic College,
52275 Ramat Gan, Israel.
\\
\href{mailto:friedsela@gmail.com}{\tt friedsela@gmail.com}}

\begin{abstract}
 \noindent
A word $w_1w_2\cdots w_n$ is said to be up-down if $w_1 < w_2 >w_3 \cdots$. Carlitz and Scoville found the generating function for the number of up-down words over an alphabet of size $k$. Using properties of the Chebyshev polynomials we derive a closed-form formula for these numbers.
\bigskip
 
\noindent \textbf{Keywords:} up-down word, Chebyshev polynomial, distributive lattice, generating function, reflection.
\smallskip

\noindent
\textbf{Math.~Subj.~Class.:} 68R05, 05A05, 05A15.
\end{abstract}

\maketitle

\baselineskip=0.20in

\section{Introduction}
A permutation $\sigma=\sigma_1\sigma_2\cdots\sigma_n$ is said to be alternating  if $\sigma_1>\sigma_2<\sigma_3 \cdots$. By a classical result of Andr\'{e} \cite{andre1881permutations}, if $c_n$ stands for the number of alternating permutations of length $n$, then \[\sum_{n\geq 0}c_n\frac{x^n}{n!} = \sec{x}+\tan{x}.\] The analogue question regarding words seems to have been studied first by Carlitz and Scoville \cite{carlitz1972up}. Let $k\geq 1$ and $n\geq 0$ be two integers and set $[k] = \{1,2,\ldots,k\}$. A word over $k$ of length $n$ is an element of the set $[k]^n$. A word $w_1w_2\cdots w_n\in[k]^n$ is said to be up-down if $w_1 <w_2 >w_3 \cdots$. Let $f_{k,n}$ stand for the number of up-down words over $k$ of length $n$. In \cite{carlitz1972up} the generating functions for the number of up-down words over $k$ of even and odd length were found. Gao et al.\ \cite{gao2016pattern} proved via a bijection that the number of up-down words over $k$ of length $n$ is equal to the number of order ideals of certain posets. The enumeration problem of these order ideals was addressed by Berman and K\"{o}hler \cite[Example 2.3]{berman1976cardinalities}, who obtained a recursive formula for them. An evidently equivalent formulation of up-down words is that of a light ray entering a stack of glass plates such that each glass plate may reflect or transmit light. These were studied by \cite{junge1973polynomials}.

Despite these three different formulations of the same enumeration problem, it seems that the possibility of obtaining a closed formula was overlooked. The purpose of this work is to fill this gap. We show that 
\[f_{k,n}=1_{n=1}+\frac{4}{2k-1}\left|\sum_{i=1}^{k-1}\frac{\sin^{2}\left(\frac{2i\pi}{2k-1}\right)}{\left(2\cos\left(\frac{2i\pi}{2k-1}\right)\right)^{n+1}}\right|,
\] where $1_{n=1}$ equals $1$ if $n=1$ and $0$ otherwise.

\section{Main results}

Let 
\[F_k(x)=\sum_{n\geq 0}f_{k,2n+1}x^{2n+1} \;\;\;\textnormal{ and }\;\;\; G_k(x)=\sum_{n\geq 0}f_{k,2n}x^{2n}\] be the generating functions for the number of up-down words over $k$ of odd and even length, respectively. 
In \cite{carlitz1972up} it was shown that
\[F_k(x)=\frac{P_k(x)}{Q_k(x)} \;\;\;\textnormal{ and }\;\;\; G_k(x)=\frac{1}{Q_k(x)},\] where 
\begin{align}
P_k(x)&=\sum_{i=0}^k(-1)^i\binom{k+i}{2i+1}x^{2i+1},\nonumber\\
Q_k(x)&=\sum_{i=0}^k(-1)^i\binom{k+i-1}{2i}x^{2i}.\nonumber
\end{align}

The key insight in translating these generating functions into a closed formula is their expressibility in terms of the Chebyshev polynomials. These polynomials are related to the cosine and sine functions and find use in approximation theory (e.g., \cite{mason2002chebyshev}). They also emerge naturally in combinatorics (e.g., \cite{knopfmacher2010staircase}). The Chebyshev polynomials of the first kind, denoted by
$T_n(x)$, are defined by 
$T_{n}(\cos \theta )=\cos(n\theta)$ and the Chebyshev polynomials of the second kind, denoted by $U_n(x)$, are defined by 
$U_{n}(\cos\theta )\sin\theta=\sin((n+1)\theta)$. Finally, the Chebyshev polynomials of the third kind, denoted by $V_n(x)$, are defined by 
$V_{n}(\cos\theta )\cos\left(\frac{1}{2}\theta\right)=\cos\left(\left(n+\frac{1}{2}\right)\theta\right)$. 
With the help of these polynomials we prove the following result.
\begin{theorem}\label{c1}
The number of up-down words over $k$ of length $n$ is given by
\begin{equation}\label{e0}
f_{k,n}=1_{n=1}+\frac{4}{2k-1}\left|\sum_{i=1}^{k-1}\frac{\sin^{2}\left(\frac{2i\pi}{2k-1}\right)}{\left(2\cos\left(\frac{2i\pi}{2k-1}\right)\right)^{n+1}}\right|.
\end{equation} where $1_{n=1}$ equals $1$ if $n=1$ and $0$ otherwise.
\end{theorem}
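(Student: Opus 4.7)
The plan is to rewrite the generating function $(P_k(x)+1)/Q_k(x) = F_k(x)+G_k(x)$ in trigonometric form via Chebyshev polynomials of the second and third kinds, and then read off the coefficient of $x^n$ by partial fractions.

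First, I would verify by Pascal's identity the coupled recursions
\[
P_k = (1-x^2)P_{k-1}+xQ_{k-1},\qquad Q_k = Q_{k-1}-xP_{k-1},
\]
from which both $P_k$ and $Q_k$ satisfy $A_{k+1} = (2-x^2)A_k - A_{k-1}$. With $t=1-x^2/2$ this is the Chebyshev recursion $A_{k+1} = 2tA_k - A_{k-1}$, and matching initial conditions identifies
\[
Q_k(x) = V_{k-1}(1-x^2/2),\qquad P_k(x) = xU_{k-1}(1-x^2/2).
\]
Setting $x = 2\sin(\phi/2)$ so that $1-x^2/2 = \cos\phi$ and using the defining identities of $U_n$ and $V_n$ gives
\[
P_k(x) = \frac{\sin(k\phi)}{\cos(\phi/2)},\qquad Q_k(x) = \frac{\cos((k-\tfrac12)\phi)}{\cos(\phi/2)}.
\]

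The $2k-2$ roots of $Q_k$ are therefore $\pm a_m$, where $a_m = 2\sin(\phi_m/2)$ and $\phi_m = (2m-1)\pi/(2k-1)$ for $m=1,\dots,k-1$. The identity $k\phi_m = (k-\tfrac12)\phi_m+\phi_m/2 = (2m-1)\pi/2+\phi_m/2$ gives $\sin(k\phi_m) = (-1)^{m-1}\cos(\phi_m/2)$, hence $P_k(a_m) = (-1)^{m-1}$ and $P_k(-a_m) = (-1)^{m}$. Differentiating the identity $V_{k-1}(\cos\phi)\cos(\phi/2) = \cos((k-\tfrac12)\phi)$ at $\phi=\phi_m$ produces $Q_k'(\pm a_m) = \mp(2k-1)(-1)^{m-1}/(2\cos^2(\phi_m/2))$.

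Now I would write $(P_k+1)/Q_k = x + R(x)/Q_k(x)$ with $\deg R < 2k-2$ (the leading $x$ contributes the $1_{n=1}$ term) and apply partial fractions to $R/Q_k$. Since $R(r) = P_k(r)+1$ vanishes at exactly half of the roots (at $a_m$ for even $m$ and at $-a_m$ for odd $m$), only $k-1$ residues survive. Expanding each surviving $1/(x-r)$ as a geometric series and reading off the coefficient of $x^n$ yields
\[
f_{k,n} = 1_{n=1}+\frac{4}{2k-1}\sum_{m=1}^{k-1}\epsilon_{m,n}\,\frac{\cos^2(\phi_m/2)}{(2\sin(\phi_m/2))^{n+1}},
\]
where $\epsilon_{m,n}\in\{\pm 1\}$ is determined by the parities of $m$ and $n$.

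Finally, I would match this to the stated formula via the reindexing $\theta_i = 2i\pi/(2k-1) = \pi-\phi_{k-i}$. Depending on whether $\theta_i$ lies below or above $\pi/2$, the identification $\phi_m/2 = \pi/2-\theta_i$ or $\phi_m/2 = \theta_i-\pi/2$ gives $\cos(\phi_m/2) = \sin\theta_i$ and $2\sin(\phi_m/2) = |2\cos\theta_i|$, with the sign of $2\cos\theta_i$ absorbed into its $(n+1)$-th power. Tracking the signs $\epsilon_{m,n}$ through this two-case bijection shows that the inner sum in the target formula equals $\pm$ the one I derived, and the nonnegativity $f_{k,n}-1_{n=1}\ge 0$ justifies collapsing both cases into a single absolute value. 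The main obstacle will be the careful sign bookkeeping in this last step.
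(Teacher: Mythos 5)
Your proposal is correct and follows the same overall route as the paper: identify $P_k(x)=xU_{k-1}(1-x^2/2)$ and $Q_k(x)=V_{k-1}(1-x^2/2)$, factor $Q_k$ over the roots $\pm 2\sin(\phi_m/2)$ with $\phi_m=(2m-1)\pi/(2k-1)$, and apply partial fractions. Two local choices differ and are worth recording. First, you get the Chebyshev identification from the coupled recursions and initial conditions (yielding $A_{k+1}=(2-x^2)A_k-A_{k-1}$), whereas the paper substitutes explicit expansions of $T_m$ and $U_m$ in powers of $1-x$ into the binomial sums and massages $Q_k$ into $T_k-\frac{x}{2k}\frac{d}{dx}T_k$; your route trades that manipulation for a routine Pascal-identity check. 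Second, and more substantially, you work with the single function $(P_k+1)/Q_k=F_k+G_k$ and observe that the numerator vanishes at exactly one root of each pair $\pm a_m$ (since $P_k(\pm a_m)=\pm(-1)^{m-1}$), so only $k-1$ residues survive, each with numerator $2$; the paper treats $F_k=P_k/Q_k$ and $G_k=1/Q_k$ separately over all $2k-2$ roots and merges the even and odd formulas only at the end. Your version explains a priori why the final sum has $k-1$ terms. One small slip to fix: the reindexing to the angles $\theta_i=2i\pi/(2k-1)$ is not $m=k-i$ (that relation gives $\cos(\phi_{k-i}/2)=\sin(\theta_i/2)$, not $\sin\theta_i$); the correct bijection is $m=k-2i$ for $i<k/2$ and $m=2i-k+1$ for $i\ge k/2$, which is precisely the two-case identification $\phi_m/2=\pm(\pi/2-\theta_i)$ you then describe, and with it every term acquires the uniform sign $(-1)^{(k+1)(n+1)}$, absorbed by the absolute value since $f_{k,n}-1_{n=1}\ge 0$. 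The paper dispatches this last step with ``easily follows,'' so your treatment of it is no less complete than the original.
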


\begin{proof}
It is well known that 
\begin{align}
T_{m}(x)&=m\sum_{i=0}^{m}(-2)^i\frac{(m+i-1)!}{(m-i)!(2i)!}(1-x)^i,\nonumber\\
U_{m}(x)&=\sum_{i=0}^{m}(-2)^{i}\binom{m+i+1}{2i+1}(1-x)^{i}.\nonumber
\end{align} It immediately follows that \[P_{k}(x)=xU_{k-1}\left(1-\frac{x^{2}}{2}\right).\] We claim that \[Q_{k}(x)=V_{k-1}\left(1-\frac{x^{2}}{2}\right).\] Indeed, using 
\begin{align}
T_m(x)&=\frac{1}{2}(U_m(x)-U_{m-2}(x)),\nonumber\\
\frac{d}{dx}T_m(x)&=mU_{m-1}(x),\nonumber\\
U_m(x)&=2xU_{m-1}(x)-U_{m-2}(x),\nonumber\\
V_m(x)&=U_m(x)-U_{m-1}(x),\nonumber
\end{align} we have
\begin{align}
Q_{k}(x)&=\sum_{i=0}^{k-1}(-1)^{i}\binom{k+i-1}{2i}x^{2i}\nonumber\\
&=k\sum_{i=0}^{k-1}(-2)^{i}\frac{(k+i-1)!}{(k-i)!(2i)!}\left(1-\left(1-\frac{x^{2}}{2}\right)\right)^{i}\nonumber\\&-\sum_{i=0}^{k-1}(-2)^{i}i\frac{(k+i-1)!}{(k-i)!(2i)!}\left(1-\left(1-\frac{x^{2}}{2}\right)\right)^{i}\nonumber\\
&=T_k\left(1-\frac{x^{2}}{2}\right)-\frac{x}{2k}\frac{d}{dx}T_k\left(1-\frac{x^{2}}{2}\right)\nonumber\\
&=U_{k-1}\left(1-\frac{x^{2}}{2}\right)-U_{k-2}\left(1-\frac{x^{2}}{2}\right)\nonumber\\
&=V_{k-1}\left(1-\frac{x^{2}}{2}\right)\nonumber.
\end{align} Thus,
\[F_k(x)=x+\frac{xU_{k-2}\left(1-\frac{x^{2}}{2}\right)}{V_{k-1}\left(1-\frac{x^{2}}{2}\right)} \;\;\;\textnormal{ and }\;\;\; G_k(x)=\frac{1}{V_{k-1}\left(1-\frac{x^{2}}{2}\right)}.\]

We now proceed as was done in \cite{knopfmacher2010staircase}. It is well-known that $V_m(x)$ is a polynomial of degree $m$ with $m$ distinct roots given by \[\cos\left(\frac{\left(i-\frac{1}{2}\right)\pi}{m+\frac{1}{2}}\right), \;\;i=1,\ldots,m.\] Furthermore, the coefficient of $x^m$ in $V_m(x)$ is $2^m$. Hence,
\begin{align}
V_{k-1}\left(1-\frac{x^{2}}{2}\right)&=2^{k-1}\prod_{i=1}^{k-1}\left(1-\frac{x^2}{2}-\cos\left(\frac{\left(i-\frac{1}{2}\right)\pi}{k-\frac{1}{2}}\right)\right)\nonumber\\&=\prod_{i=1}^{k-1}\left(2\sin\left(\frac{\left(i-\frac{1}{2}\right)\pi}{2k-1}\right)-x\right)\left(2\sin\left(\frac{\left(i-\frac{1}{2}\right)\pi}{2k-1}\right)+x\right)\nonumber.
\end{align} Now, by partial fraction decomposition (e.g., \cite[(5.8)]{aramanovich2014mathematical}), if $q(x)$ is a polynomial of degree $m$ whose roots are all real and distinct, which factors as $q(x)=(x-\beta_1)\cdots(x-\beta_m)$, and $p(x)$ is a polynomial whose degree is strictly less than $m$, then \[\frac{p(x)}{q(x)}=\sum_{i=1}^{m}\frac{p(\beta_{i})}{q'(\beta_{i})}\frac{1}{x-\beta_{i}}.\] Since \[\frac{d}{dx}U_{m}(x)=\frac{(m+1)T_{m+1}(x)-xU_{m}(x)}{x^{2}-1},\] we have
\[\frac{d}{dx}V_{k-1}\left(1-\frac{x^{2}}{2}\right)=\frac{2x\left(kU_{k-2}\left(1-\frac{x^{2}}{2}\right)+(k-1)U_{k-1}\left(1-\frac{x^{2}}{2}\right)\right)}{x^{2}-4}.\] Thus, 
\[\frac{d}{dx}V_{k-1}\left(1-\frac{x^{2}}{2}\right)_{|x=\pm 2\sin\left(\frac{\left(i-\frac{1}{2}\right)\pi}{2k-1}\right)}=\pm \frac{(-1)^{i}(2k-1)}{2\cos^{2}\left(\frac{\left(i-\frac{1}{2}\right)\pi}{2k-1}\right)}.\] Furthermore, 
\[xU_{k-2}\left(1-\frac{x^{2}}{2}\right)_{|x=\pm 2\sin\left(\frac{\left(i-\frac{1}{2}\right)\pi}{2k-1}\right)}=\pm (-1)^{i+1}.\] We conclude that
\[
f_{k,n} =\frac{1}{2^{n-1}(2k-1)}\sum_{i=1}^{k-1}(-1)^{i+1}\frac{\cos^{2}\left(\frac{\left(i-\frac{1}{2}\right)\pi}{2k-1}\right)}{\sin^{n+1}\left(\frac{\left(i-\frac{1}{2}\right)\pi}{2k-1}\right)},
\] if $n$ is even, and
\[f_{k,n} = 1_{n=1}+\frac{1}{2^{n-1}(2k-1)}\sum_{i=1}^{k-1}\frac{\cos^{2}\left(\frac{\left(i-\frac{1}{2}\right)\pi}{2k-1}\right)}{\sin^{n+1}\left(\frac{\left(i-\frac{1}{2}\right)\pi}{2k-1}\right)},\] if $n$ is odd.
It follows that, for arbitrary $n$, 
\[
f_{k,n} =1_{n=1}+\frac{4}{2k-1}\sum_{i=1}^{k-1}\frac{\cos^{2}\left(\frac{\left(i-\frac{1}{2}\right)\pi}{2k-1}\right)}{\left(2(-1)^{i+1}\sin\left(\frac{\left(i-\frac{1}{2}\right)\pi}{2k-1}\right)\right)^{n+1}},\] from which formula \eqref{e0} easily follows.
\end{proof}

\begin{example}
For $k=2$, formula \eqref{e0} reduces to     
\[f_{2,n}=1_{n=1}+\frac{4}{3}\left|\frac{\sin^{2}\left(\frac{2\pi}{3}\right)}{\left(2\cos\left(\frac{2\pi}{3}\right)\right)^{n+1}}\right|=1_{n=1}+\frac{1}{3\cdot 2^{n-1}}\left|\frac{\left(\frac{\sqrt{3}}{2}\right)^{2}}{\left(-\frac{1}{2}\right)^{n+1}}\right|=1_{n=1}+1,\] as it should, since, in this cases, there is only one up-down word for each $n\neq 1$, namely, $121\cdots$, and, for $n=1$, there are two such words, namely $1$ and $2$.

For $k=3$, it was shown in  \cite[Theorem 2.3]{gao2016pattern} that $f_{3,n} = F_{n+2}$, for $n\geq 2$, where $F_m$ stands for the $m$th Fibonacci number. This, of course, follows also from formula \eqref{e0}. Indeed,
\begin{align}
f_{3,n}&=\frac{4}{5}\left|\frac{\sin^{2}\left(\frac{2\pi}{5}\right)}{\left(2\cos^{n+1}\left(\frac{2\pi}{5}\right)\right)^{n+1}}+\frac{\sin^{2}\left(\frac{4\pi}{5}\right)}{\left(2\cos^{n+1}\left(\frac{4\pi}{5}\right)\right)^{n+1}}\right|\nonumber\\
&=\frac{1}{2}\left(\left(1+\frac{1}{\sqrt{5}}\right)\left(\frac{1+\sqrt{5}}{2}\right)^{n+1}+\left(1-\frac{1}{\sqrt{5}}\right)\left(\frac{1-\sqrt{5}}{2}\right)^{n+1}\right)\nonumber\\
&=\frac{1}{2}(F_{n+1}+L_{n+1})\nonumber\\
&=F_{n+2},\nonumber    
\end{align} where $L_m$ stands for the $m$th Lucas number (e.g., \cite[pp.~132-133]{stein1972john}).
\end{example}

\begin{remark}
Replacing $<$ and $>$ with $\leq$ and $\geq$, respectively, in the definition of up-down words, gives rise to weakly up-down words. These seem not to have been considered at all, but turn out to be essentially the same as up-down words, since, for $n\neq 1$, the number of weakly up-down words over $k$ of length $n$ is equal to the number of up-down words over $k+1$ of length $n$. Indeed, let $w_1\cdots w_n\in[k]^n$ be a weakly up-down. Define an up-down word $u_1\cdots u_n\in[k+1]^n$ as follows: 
\[u_{i}=\begin{cases}
w_{i}, & \text{if \ensuremath{i} is odd or }w_{i-1}<w_{i}>w_{i+1};\\
k+1, & \text{otherwise}.
\end{cases}\] Conversely, let $u_1\cdots u_n\in[k+1]^n$ be an up-down word and set $u_0=u_{n+1}=0$. Define a weakly up-down word $w_1\cdots w_n\in[k]^n$ as follows: 
\[w_{i}=\begin{cases}
u_{i}, & \text{if }u_{i}<k+1;\\
\max\left\{u_{i-1},u_{i+1}\right\} , & \text{otherwise}.
\end{cases}\] It is easy to see that the two maps are inverse to one another and hence bijections.
Thus, the number of weakly up-down words over $k$ of length $n$ is given by
\[\frac{4}{2k+1}\left|\sum_{i=1}^{k}\frac{\sin^{2}\left(\frac{2i\pi}{2k+1}\right)}{\left(2\cos\left(\frac{2i\pi}{2k+1}\right)\right)^{n+1}}\right|.
\] 
\end{remark}

We now consider the cyclic analogue of up-down words. A word $w_1\cdots w_n\in[k]^n$ of even length $n$ is said to be cyclic up-down if it is up-down and $w_n>w_1$.

\begin{theorem}
Let $A(x)=\sum_{n\geq 0} a_{k,2n}x^{2n}$ be the generating function for the number of cyclic up-down words of even length. Then 
\begin{equation}\label{k2}
A(x) = 1 +\frac{x^{2}}{4-x^{2}}\left(k-1+(2k-1)\frac{U_{k-2}\left(1-\frac{x^{2}}{2}\right)}{V_{k-1}\left(1-\frac{x^{2}}{2}\right)}\right).
\end{equation} In particular, 
\begin{equation}\label{k3}
a_{k,2n} = \sum_{i=1}^{k-1}\frac{1}{\left(2\cos\left(\frac{2i\pi}{2k-1}\right)\right)^{2n}}.
\end{equation}
\end{theorem}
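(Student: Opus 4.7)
The plan is to reduce the counting to a trace computation via a transfer matrix and then diagonalize it. Define the $k\times k$ matrices $U$ and $D=U^T$ by $U_{a,b}=1$ if $a<b$ and $0$ otherwise. A cyclic up-down word $w_1<w_2>w_3<\cdots<w_{2n}$ with $w_{2n}>w_1$ is precisely a closed walk of length $2n$ on $[k]$ whose transitions alternate cyclically between the $U$-relation and the $D$-relation, so that
\[
a_{k,2n}=\operatorname{tr}\bigl((UD)^{n}\bigr),\qquad n\geq 1,
\]
and summing the geometric series yields $A(x)=1-k+\operatorname{tr}\bigl((I-x^{2}UD)^{-1}\bigr)$.

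Next I would determine the spectrum of $UD$. Since $UD$ and $DU$ share all nonzero eigenvalues and $(DU)_{a,b}=\min(a,b)-1$ vanishes whenever $a=1$ or $b=1$, the nonzero eigenvalues of $UD$ coincide with those of the $(k-1)\times(k-1)$ min-matrix $M_{a,b}=\min(a,b)$. The inverse $M^{-1}$ is the classical tridiagonal matrix with $-1$ on the off-diagonals and diagonal $(2,2,\ldots,2,1)$, whose eigenvalues I would obtain by plugging the ansatz $v_{m}=\sin(m\theta)$ into the three-term recurrence $-v_{m-1}+2v_{m}-v_{m+1}=\lambda v_{m}$ with $v_{0}=0$ and the modified corner condition $v_{k}=v_{k-1}$; this gives $c_{j}=2-2\cos\theta_{j}$ with $\theta_{j}=(2j-1)\pi/(2k-1)$ for $j=1,\ldots,k-1$. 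Hence the nonzero eigenvalues of $UD$ are $c_{j}^{-1}$ and
\[
a_{k,2n}=\sum_{j=1}^{k-1}c_{j}^{-n}=\sum_{j=1}^{k-1}\Bigl(2\sin\bigl(\tfrac{(2j-1)\pi}{2(2k-1)}\bigr)\Bigr)^{-2n}.
\]
Applying the co-function identity $\sin((2j-1)\pi/(2(2k-1)))=\cos((k-j)\pi/(2k-1))$, together with the bijection $t\mapsto 2t$ on $\mathbb{Z}/(2k-1)\mathbb{Z}\setminus\{0\}$ (which makes the multisets $\{|\cos(j\pi/(2k-1))|\}_{j=1}^{k-1}$ and $\{|\cos(2i\pi/(2k-1))|\}_{i=1}^{k-1}$ equal), re-indexes this into \eqref{k3}.

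For \eqref{k2} I would start from the partial-fraction expansion
\[
\operatorname{tr}\bigl((I-x^{2}UD)^{-1}\bigr)=1+\sum_{j=1}^{k-1}\frac{c_{j}}{c_{j}-x^{2}},
\]
substitute $y=1-x^{2}/2$ so that $c_{j}-x^{2}=2(y-\cos\theta_{j})$, and apply the logarithmic-derivative identity $V_{k-1}'(y)/V_{k-1}(y)=\sum_{j}(y-\cos\theta_{j})^{-1}$ (which follows from $V_{k-1}(y)=2^{k-1}\prod_{j}(y-\cos\theta_{j})$), together with the decomposition $c_{j}=2(1-y)+2(y-\cos\theta_{j})$, to obtain
\[
\operatorname{tr}\bigl((I-x^{2}UD)^{-1}\bigr)=k+\tfrac{x^{2}}{2}\cdot\frac{V_{k-1}'(y)}{V_{k-1}(y)}.
\]
The final step is the Chebyshev identity $(1+y)V_{n}'(y)=n\,V_{n}(y)+(2n+1)\,U_{n-1}(y)$, applied with $n=k-1$ and $1+y=(4-x^{2})/2$, which transforms the above into the right-hand side of \eqref{k2}. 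I would prove this identity by differentiating $V_{n}(\cos\theta)\cos(\theta/2)=\cos((n+\tfrac{1}{2})\theta)$ in $\theta$ and reducing the result to the trigonometric identity $\sin((n+\tfrac{1}{2})\theta)\cos(\theta/2)-\cos((n+\tfrac{1}{2})\theta)\sin(\theta/2)=\sin(n\theta)$. The main obstacle I expect is locating and verifying this auxiliary Chebyshev identity; the remaining steps are essentially bookkeeping, although some care is needed to see that the apparent pole of \eqref{k2} at $x^{2}=4$ cancels, using $U_{k-2}(-1)=(-1)^{k-2}(k-1)$ and $V_{k-1}(-1)=(-1)^{k-1}(2k-1)$.
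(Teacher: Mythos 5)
Your proposal is correct, but it follows a genuinely different route from the paper. The paper proves \eqref{k2} by a recursion in the alphabet size: it classifies cyclic up-down words over $[k+1]$ by the positions of the letter $k+1$ (which can only occupy even indices), obtains $a_{k+1,2n}=a_{k,2n}+\sum_{i=1}^{n}i\,f_{k+1,2(n-i)-1}f_{k,2i-1}$, converts this to $A_{k+1}(x)=A_{k}(x)+\frac{x^{2}}{2}F_{k+1}(x)\bigl(2x+\frac{d}{dx}(xF_{k}(x))\bigr)$, telescopes over $k$, and then asserts without detail that the resulting sum simplifies to \eqref{k2}; the closed form \eqref{k3} is extracted afterwards. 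You instead use the transfer-matrix method: $a_{k,2n}=\operatorname{tr}\bigl((UD)^{n}\bigr)$, the nonzero spectrum of $UD$ equals that of the min-matrix $M$, and $M^{-1}$ is the standard tridiagonal matrix whose eigenvalues $c_{j}=2-2\cos\bigl(\frac{(2j-1)\pi}{2k-1}\bigr)$ you compute by the sine ansatz with the corner condition $v_{k}=v_{k-1}$. This yields \eqref{k3} immediately and independently of \eqref{k2}, after the legitimate reindexing identifying the multisets $\{|\cos(m\pi/(2k-1))|\}_{m=1}^{k-1}$ and $\{|\cos(2i\pi/(2k-1))|\}_{i=1}^{k-1}$ (each pair $\{t,2k-1-t\}$ contributes one even and one odd residue, and the exponent $2n$ kills the signs). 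Formula \eqref{k2} then falls out of the logarithmic derivative $V_{k-1}'/V_{k-1}=\sum_{j}(y-\cos\theta_{j})^{-1}$ combined with $(1+y)V_{n}'(y)=nV_{n}(y)+(2n+1)U_{n-1}(y)$, which does reduce to $\sin\bigl((n+\tfrac{1}{2})\theta-\tfrac{\theta}{2}\bigr)=\sin(n\theta)$ exactly as you claim; I verified each of these steps, including the cancellation of the apparent pole at $x^{2}=4$. What your approach buys: it supplies the simplification the paper leaves to the reader, it explains conceptually why $a_{k,2n}$ is a power sum (the quantities $(2\cos(2i\pi/(2k-1)))^{-2}$ are precisely the nonzero eigenvalues of $UD$), and it does not depend on the Carlitz--Scoville formulas for $F_{k}$ and $G_{k}$. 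What it costs: it requires the auxiliary Chebyshev identity and a spectral computation, whereas the paper's recursion is purely combinatorial and reuses machinery already set up for Theorem \ref{c1}.
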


\begin{proof}
Let us modify $F_m(x)$ and replace it with \[\frac{1}{x}-x+F_{m}(x)=\frac{1}{x}+\frac{xU_{m-2}\left(1-\frac{x^{2}}{2}\right)}{V_{m-1}\left(1-\frac{x^{2}}{2}\right)}.\] In particular, $f_{m,-1} = 1$ and $f_{m,1} = m-1$. We count the number of cyclic up-down words over $k+1$ according to the number of occurrences and position of $k+1$. Notice that the letter $k+1$ can be placed only at an even letter index. There are $h_{k,2n}$ cyclic up-down words without any $k+1$ and $nf_{k,2n-1}$ such words with exactly one occurrence of $k+1$. Now assume that there are at least two occurrences of $k+1$ and denote by $i$ and $j$ the leftmost and rightmost index of these $k+1$s, respectively. There are $f_{k+1,2(j-i)-1}$ possibilities for the subword $w_{i+1}\cdots w_{j-1}$ and $f_{k,2n-2(j-i)-1}$ possibilities for the (cyclic) subword $w_{j+1}\cdots w_{2n}w_1\cdots w_{i-1}$. It follows that 
\begin{align}
a_{k+1,2n}&=a_{k,2n}+nf_{k,2n-1}+\sum_{i=1}^{n-1}\sum_{j=i+1}^{n}f_{k+1,2(j-i)-1}f_{k,2n-2(j-i)-1}\nonumber\\
&=a_{k,2n}+\sum_{i=1}^{n}if_{k+1,2(n-i)-1}f_{k,2i-1}.\nonumber
\end{align} Multiplying both sides of the equation by $x^{2n}$ and summing over $n\geq 1$, we obtain \[A_{k+1}(x)=A_{k}(x)+\frac{x^{2}}{2}F_{k+1}(x)\left(2x+\frac{d}{dx}\left(xF_{k}(x)\right)\right).\] It follows that \[A_{k}(x)=1+\frac{x^{2}}{2}\sum_{i=1}^{k-1}F_{i+1}(x)\left(2x+\frac{d}{dx}\left(xF_{i}(x)\right)\right).\] One may show that this expression yields \eqref{k2}, from which the closed formula easily follows.
\end{proof}

\begin{example}
For $k=2$, formula \eqref{k3} should always give $1$. And, indeed, \[a_{2,2n} =\frac{1}{\left(2\cos\left(\frac{2\pi}{3}\right)\right)^{2n}}=\frac{1}{\left(-2\cdot\frac{1}{2}\right)^{2n}}=1.\] Consider now $k=3$. We have  
\begin{align}
a_{3,2n} &= \frac{1}{\left(2\cos\left(\frac{2\pi}{5}\right)\right)^{2n}}+\frac{1}{\left(2\cos\left(\frac{4\pi}{5}\right)\right)^{2n}}\nonumber\\
&=\frac{1}{\left(2\left(\frac{-1+\sqrt{5}}{4}\right)\right)^{2n}}+\frac{1}{\left(2\left(\frac{-1-\sqrt{5}}{4}\right)\right)^{2n}}\nonumber\\
&=\left(\frac{1+\sqrt{5}}{2}\right)^{2n}+\left(\frac{1-\sqrt{5}}{2}\right)^{2n}\nonumber\\&=L_{2n}    \nonumber.
\end{align} This seems to give a new combinatorial interpretation to the bisection of the Lucas numbers (cf.\ sequence A005248 in \cite{oeis}).
\end{example}

\bibliographystyle{vancouver}
\bibliography{Vancouver.bib}
\end{document}